\begin{document}

\title{Multicolour Ramsey Numbers of Odd Cycles}
\date{}
\author{A. Nicholas Day$^{*}$ and J. Robert Johnson\thanks{School of Mathematical Sciences, Queen Mary University of London, London E1 4NS, UK} }
\date{\today}
\maketitle 
\newtheorem{Thm}{Theorem}
\newtheorem{Lemma}[Thm]{Lemma}
\newtheorem{Def}[Thm]{Definition}
\newtheorem{Coro}[Thm]{Corollary}
\newtheorem{Conj}[Thm]{Conjecture}
\newtheorem{Prop}[Thm]{Proposition}
\newtheorem{Quest}[Thm]{Question}

\begin{abstract}
We show that for any positive integer $r$ there exists an integer $k$ and a $k$-colouring of the edges of $K_{2^{k}+1}$ with no monochromatic odd cycle of length less than $r$. This makes progress on a problem of Erd\H{o}s and Graham and answers a question of Chung.  We use these colourings to give new lower bounds on the $k$-colour Ramsey number of the odd cycle and prove that, for all odd $r$ and all $k$ sufficiently large, there exists a constant $\epsilon = \epsilon(r) > 0$ such that $R_{k}(C_{r}) > (r-1)(2+\epsilon)^{k-1}$.

\end{abstract}

\section{Introduction}

In this paper all \textit{colourings} of a graph $G$ will refer to colourings of the edges of $G$.  The \textit{odd girth} of $G$, written $\text{og}(G)$, is the length of the shortest odd cycle in $G$.  Given a colouring $\mathcal{C}$ of $G$ we say the odd girth of $\mathcal{C}$, written $\text{og}(\mathcal{C})$, is the length of the shortest monochromatic odd cycle found in $\mathcal{C}$.    It is a simple exercise to see that it is possible to $k$-colour the complete graph $K_{2^{k}}$ such that each colour comprises a bipartite graph.  Moreover, such colourings only exist for $K_{n}$ if $ n \leqslant 2^{k}$.  Indeed, consider labelling each vertex of $K_{n}$ with a binary vector of length $k$, where the $i^{th}$ coordinate of the label given to a vertex is determined by which side of the bipartition of colour $i$ the vertex lies in.  All vertices of $K_{n}$ must receive distinct labels, and so $n \leqslant 2^{k}$.  It follows that any $k$-colouring of $K_{2^{k}+1}$ must contain a monochromatic odd cycle.  Based on this observation, Erd\H{o}s and Graham \cite{ErdosGraham} asked the following question:

\begin{Quest}\label{Quest1}
How large can the smallest monochromatic odd cycle in a $k$-colouring of $K_{2^{k}+1}$ be?
\end{Quest}

Moreover, Chung \cite{Chung} asked further whether or not this quantity is unbounded as $k$ increases.  In this paper we show that the size of the least odd cycle that must appear is indeed unbounded.

\begin{Thm}\label{Thm1}
For all positive integers $r$ there exists an integer $k$ and a $k$-colouring of  $K_{2^{k}+1}$ with odd girth at least $r$.
\end{Thm}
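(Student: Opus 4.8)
The first step is to describe what a $k$-colouring of $K_{2^{k}+1}$ looks like when it is as close to bipartite as possible. If $\mathcal{C}$ is a $k$-colouring of $K_{2^{k}}$ in which every colour class is bipartite, the labelling argument of the introduction shows that the map sending a vertex to the vector recording which side of each bipartition it lies on is a bijection onto $\mathbb{F}_{2}^{k}$; so we may take the vertex set to be $\mathbb{F}_{2}^{k}$, with colour $i$ having bipartition $\{z : z_{i}=0\}$ versus $\{z : z_{i}=1\}$, and the colouring becomes simply a choice, for each edge $\{x,y\}$, of some colour in $\mathrm{supp}(x+y)$. Now adjoin one vertex $v^{\ast}$, joining it to $z$ in a colour $\psi(z)$. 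Because every colour class is bipartite on $\mathbb{F}_{2}^{k}$, every monochromatic odd cycle of the extended colouring must pass through $v^{\ast}$, and a short computation then shows that the shortest monochromatic odd cycle in colour $i$ has length $2+\min d_{i}(a,b)$ (or none exists), the minimum being over $a,b\in\psi^{-1}(i)$ that differ in coordinate $i$ and lie in a common component of the colour-$i$ graph, where $d_{i}$ is distance there. So it suffices to choose the colouring of $E(K_{\mathbb{F}_{2}^{k}})$ and the map $\psi$ so that, in every colour $i$, any two vertices of $\psi^{-1}(i)$ that differ in coordinate $i$ and lie in one colour-$i$ component are at colour-$i$ distance at least $r-2$; that at least one colour class must then be non-bipartite on $K_{2^{k}+1}$ is automatic, by the introduction.

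\textbf{The construction.} The plan is to make the colour classes on $\mathbb{F}_{2}^{k}$ into sparse, ``thin'' bipartite graphs with transparent metric structure. Note that each flip edge $\{z,z+e_{i}\}$ is forced into colour $i$; one threads these forced matching edges into vertex-disjoint paths of a prescribed odd length of order $r$ (this imposes a divisibility condition that one meets by taking $k$ large). One then lets $\psi^{-1}(i)$ consist essentially of the endpoints of the colour-$i$ paths, so that the only monochromatic odd cycles are obtained from $v^{\ast}$ together with an entire colour-$i$ path and so have length of order $r$. The vertices that do not fit this pattern are dealt with by designating a few colours to remain bipartite on all of $K_{2^{k}+1}$ (confining their $\psi$-preimage to one side) and by choosing $\psi$ on what is left by a greedy or local-lemma argument, which succeeds because $k$ may be taken enormous relative to the fixed target $r$.

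\textbf{Where the difficulty lies.} The crux is the simultaneous global bookkeeping: one has to partition \emph{all} of $E(K_{2^{k}})$ into $k$ classes with the prescribed coordinate bipartitions, keep almost all of them disjoint unions of paths of length about $r$ while letting the remaining few absorb the bulk of the edges, and still choose $\psi$ compatibly. The awkward point is the edges whose support lies entirely among the ``thin'' coordinates: these are forced into the thin colour classes and threaten to wreck the union-of-paths structure, and reconciling this with the demand that the totals come out to exactly $2^{k}+1$ vertices and $k$ colours — which is also what dictates how large $k$ must be for a given $r$ — is the technical heart of the argument and the step I would expect to be the main obstacle.
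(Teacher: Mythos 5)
Your opening reformulation is correct, and it is in fact close in spirit to what the paper does: in the paper's colourings too, every colour class minus one distinguished vertex is bipartite with a prescribed bipartition, so every monochromatic odd cycle passes through that vertex (the ``root''), and the problem reduces to controlling odd distances, within each colour class, between the neighbours of the root. The gap is in the construction itself, which you explicitly leave open (``the step I would expect to be the main obstacle''), and the specific plan you outline cannot be repaired as stated. If $T$ is the set of ``thin'' coordinates, $|T|=j$, then every edge $\{x,y\}$ with $\mathrm{supp}(x+y)\subseteq T$ is forced into a thin colour; there are $(2^{j}-1)2^{k-1}$ such edges, while $j$ colour classes that are each vertex-disjoint unions of paths on $2^{k}$ vertices carry at most $j(2^{k}-1)$ edges in total. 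This forces $2^{j}-1<2j$, hence $j\leqslant 2$. But with $j\leqslant 2$ the forced matchings in directions $e_{1}$, $e_{2}$ and $e_{1}+e_{2}$ confine every component of a thin colour to a coset of $\langle e_{1},e_{2}\rangle$, i.e.\ to paths of length at most $3$ (or $4$-cycles). For $r\geqslant 6$ your distance condition then forbids $\psi^{-1}(i)$ from containing any pair at odd colour-$i$ distance in a common component, so the thin colours contribute no odd cycles either, every colour of $K_{2^{k}+1}$ is bipartite, and you contradict the pigeonhole argument of the introduction. In other words, no admissible choice of $\psi$ exists for this structure: taking $k$ enormous does not help, because the obstruction is per-coset, not global.

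The moral is that the colour classes which are permitted to carry odd cycles cannot be sparse unions of paths; they must be dense bipartite graphs in which the distances between the designated root-neighbours are nonetheless controlled. This is precisely what the paper's $r$-round graphs achieve (parts $X_{1},\ldots,X_{r}$ arranged cyclically, with all edges between consecutive parts allowed, and $X_{1}$ the root), and the real content of the proof is the doubling step of Lemma~\ref{Lemma1}, which converts an $(r_{1},\ldots,r_{k})$-rooted-round colouring of $K_{n}$ into an $(r_{1}+2,r_{2},\ldots,r_{k},2r_{1}-1)$-one of $K_{2n-1}$ by interleaving a reflected copy and rerouting one colour; iterating from a $(5,5)$-colouring of $K_{5}$ proves the theorem. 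Your proposal contains no substitute for this step, and the path-threading idea it relies on is not merely incomplete but structurally impossible.
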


The proof of Theorem \ref{Thm1} can be found in Section \ref{Section2}. From a quantitative perspective, our proof of Theorem \ref{Thm1} will show that there exist $k$-colourings of $K_{2^{k}+1}$ with odd girth at least $2^{\sqrt{2 \log_{2}(k) - c}}$ for some constant $c$.  This result is a consequence of Corollary \ref{Coro1} which can be found at the end of Section \ref{Section2}.

For a graph $H$, the $k$-colour Ramsey number $R_{k}(H)$ is defined as the least integer $n$ such that every $k$-colouring of $K_{n}$ contains a monochromatic copy of $H$.  We say that a colouring of a graph $G$ is $H$\textit{-free} if it contains no monochromatic copy of $H$.  Erd\H{o}s and Graham \cite{ErdosGraham} showed that 

\begin{equation}\label{eq1}
R_{k}(C_{r}) \geqslant (r-1)2^{k-1} +1
\end{equation}
whenever $r \geqslant 3$ is an odd integer.  The construction used to show this is as follows: When $k = 1$ simply take a $1$-colouring of $K_{r-1}$, for $k>1$ take two disjoint copies of the construction for $k-1$ and colour every edge between the two copies with a new colour.  This construction led Bondy and Erd\H{o}s \cite{BondyErdos} to make the following conjecture.
\begin{Conj} \label{Conj1}
$\textup{(Bondy, Erd\H{o}s)}$ Equality holds in (\ref{eq1}) for all odd integers $r > 3$.
\end{Conj}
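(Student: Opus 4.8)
Since the lower bound $R_{k}(C_{r}) \geqslant (r-1)2^{k-1}+1$ is already supplied by (\ref{eq1}), proving the conjecture reduces to establishing the matching upper bound: every $k$-colouring of $K_{n}$ with $n = (r-1)2^{k-1}+1$ contains a monochromatic $C_{r}$. The plan is to argue by induction on $k$. The base case is $k=2$, where the assertion $R_{2}(C_{r}) = 2r-1$ for odd $r \geqslant 5$ is a classical theorem, so I would take it as given and concentrate entirely on the inductive step.

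For the inductive step, fix a $k$-colouring of $K_{n}$ with $n = (r-1)2^{k-1}+1$ and suppose for contradiction that it is $C_{r}$-free. Single out a colour class with the most edges, say colour $k$, and let $G$ be the graph of its edges; then $G$ is $C_{r}$-free and has average degree at least $(n-1)/k = (r-1)2^{k-1}/k$. The heart of the argument is to show that such a dense $C_{r}$-free colour class must be \emph{essentially bipartite}, in the sense that $V(K_{n})$ can be partitioned into two parts $A$ and $B$, each of size at least $(r-1)2^{k-2}+1$, with almost all colour-$k$ edges running between $A$ and $B$. Granting this, one of the two parts carries a $K_{m}$ with $m \geqslant (r-1)2^{k-2}+1$ that is coloured using essentially only the remaining $k-1$ colours, and the induction hypothesis then produces a monochromatic $C_{r}$ inside it, the desired contradiction.

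To obtain the near-bipartiteness of $G$, I would invoke an odd-cycle interpolation principle: in a non-bipartite graph that is sufficiently well connected, the lengths of the odd cycles occupy a long interval of integers, so that the presence of one short odd cycle together with one long odd cycle forces odd cycles of \emph{every} intermediate length, in particular one of length exactly $r$. Since $G$ is $C_{r}$-free, this precludes $G$ from simultaneously containing short and long odd cycles; combined with the density of $G$, one should be able to force $G$ to avoid all long odd cycles and hence to admit the desired balanced bipartition after deleting a controlled number of vertices of small colour-$k$ degree.

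The main obstacle is precisely this structural step, and it is delicate for two reasons. First, the interpolation principle requires a minimum-degree or connectivity hypothesis, so one must prune the low-degree vertices before applying it while still retaining at least $(r-1)2^{k-2}+1$ vertices on the side to which the induction is applied. Second, even after producing the bipartition $A \sqcup B$, the parts are not cleanly $(k-1)$-coloured: residual colour-$k$ edges inside $A$ and $B$ must be removed or absorbed, which demands a stability argument showing that a nearly extremal $C_{r}$-free colour class can be cleaned up without destroying the count. I expect this cleaning-up step to be where the entire scheme is most fragile, and the true crux is controlling the losses accumulated across all $k$ levels of the recursion, so that the exponential constant $2^{k-1}$ is preserved rather than degraded; it is on the viability of this control that the conjecture stands or falls.
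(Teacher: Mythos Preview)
Your proposal attempts to \emph{prove} the conjecture, but the paper in fact \emph{disproves} it: Theorem~\ref{Thm2} shows that for every odd $r$ there is an $\epsilon(r)>0$ with $R_k(C_r) > (r-1)(2+\epsilon)^{k-1}$ for all sufficiently large $k$, which is strictly larger than $(r-1)2^{k-1}+1$. So the statement you are trying to establish is false, and any argument along the lines you describe must break down somewhere.

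Concretely, the step that fails is exactly the one you flagged as ``most fragile'': the claim that the densest colour class in a $C_r$-free $k$-colouring of $K_n$ with $n=(r-1)2^{k-1}+1$ must be essentially bipartite with both sides of size at least $(r-1)2^{k-2}+1$. The paper's construction (Lemma~\ref{Lemma1} iterated, then combined via the product colouring $\mathcal{A}*\mathcal{B}$ of Lemma~\ref{Lemma2}) produces $C_r$-free colourings of complete graphs on $(r-1)2^c(2^f+1)^m$ vertices using $c+1+mf$ colours, where $f=f(r)$ is chosen so that $K_{2^f+1}$ admits an $f$-colouring of odd girth exceeding $r$. In these colourings the colour classes are $r'$-round graphs for various odd $r'>r$; such graphs are far from bipartite yet still $C_r$-free, and no bipartition of the vertex set makes them ``essentially bipartite'' in the quantitative sense your induction requires. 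The losses you worried about accumulating across levels are not merely hard to control---they are genuinely unbounded, which is precisely what Theorem~\ref{Thm1} establishes.

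It is worth noting that your intuition is correct in a different regime: Jenssen and Skokan proved that the conjectured equality does hold for each fixed $k$ once $r$ is sufficiently large, and their argument (regularity plus stability) is in spirit close to what you outline. The failure is in the opposite direction, $r$ fixed and $k\to\infty$, which is where the paper's constructions live.
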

In this paper we disprove Conjecture \ref{Conj1} by using the result of Theorem \ref{Thm1} to construct colourings that give new lower bounds for $R_{k}(C_{r})$ whenever $r$ is an odd integer and $k$ is sufficiently large.

\begin{Thm}\label{Thm2}
For all odd integers $r$ there exists a constant $\epsilon = \epsilon(r) > 0$ such that, for all $k$ sufficiently large, $R_{k}(C_{r}) > (r-1)(2+\epsilon)^{k-1}$.
\end{Thm}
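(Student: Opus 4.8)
The plan is to amplify the Erd\H{o}s--Graham doubling construction by replacing its trivial single-edge building block with a much larger block supplied by Theorem \ref{Thm1}. The basic tool is a product of colourings. Given an $m$-colouring $\mathcal{C}$ of $K_{n}$ using colours $\{1,\dots,m\}$ and a $k_{0}$-colouring $\mathcal{D}$ of $K_{N}$ using colours $\{m+1,\dots,m+k_{0}\}$, define a colouring of $K_{nN}$, whose vertex set we identify with $V(\mathcal{D})\times V(\mathcal{C})$, by colouring the edge $\{(u,x),(v,y)\}$ with $\mathcal{D}(uv)$ if $u\neq v$ and with $\mathcal{C}(xy)$ if $u=v$. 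I claim that if $\mathcal{C}$ is $C_{r}$-free and $\text{og}(\mathcal{D})>r$, then this $(m+k_{0})$-colouring of $K_{nN}$ is again $C_{r}$-free. Each colour class inherited from $\mathcal{C}$ is a disjoint union of $N$ copies of the corresponding class of $\mathcal{C}$, hence $C_{r}$-free; each colour class inherited from $\mathcal{D}$ is the blow-up of the corresponding class $G$ of $\mathcal{D}$ in which every vertex is replaced by an independent set of size $n$, and projecting any closed walk in this blow-up down to $G$ (consecutive vertices of the walk lie in distinct parts, so they project to adjacent vertices of $G$) shows that the blow-up has the same odd girth as $G$, which exceeds $r$; so it too is $C_{r}$-free.

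Next I would invoke Theorem \ref{Thm1} with $r$ replaced by $r+2$ to fix an integer $k_{0}=k_{0}(r)$ and a $k_{0}$-colouring $\mathcal{D}$ of $K_{2^{k_{0}}+1}$ with $\text{og}(\mathcal{D})\geqslant r+2>r$. For any $s$ with $1\leqslant s\leqslant k_{0}$, start from the $C_{r}$-free $s$-colouring of $K_{(r-1)2^{s-1}}$ given by the Erd\H{o}s--Graham construction behind (\ref{eq1}), and apply the product operation above $j$ times, each time with a fresh copy of $\mathcal{D}$ in the role of the outer block. By the previous paragraph this produces a $C_{r}$-free $(jk_{0}+s)$-colouring of $K_{n}$ with $n=(2^{k_{0}}+1)^{j}(r-1)2^{s-1}$. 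Writing $\alpha:=(2^{k_{0}}+1)^{1/k_{0}}$, it follows that for every $k$ of the form $k=jk_{0}+s$ with $1\leqslant s\leqslant k_{0}$ we have $R_{k}(C_{r})\geqslant (r-1)2^{s-1}\alpha^{\,k-s}+1$.

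It remains to compare exponents. Since $2^{k_{0}}+1>2^{k_{0}}$ we have $\alpha>2$, so we may fix any $\epsilon$ with $0<\epsilon<\alpha-2$; note that $\epsilon$ depends only on $r$. For $k$ large, write $k=jk_{0}+s$ with $1\leqslant s\leqslant k_{0}$, so that
\[
\frac{(r-1)\,2^{s-1}\alpha^{\,k-s}}{(r-1)(2+\epsilon)^{k-1}}
=\Big(\frac{\alpha}{2+\epsilon}\Big)^{k-s}\Big(\frac{2}{2+\epsilon}\Big)^{s-1}
\geqslant\Big(\frac{\alpha}{2+\epsilon}\Big)^{k-k_{0}}\Big(\frac{2}{2+\epsilon}\Big)^{k_{0}-1},
\]
using $\alpha/(2+\epsilon)>1$ and $2/(2+\epsilon)<1$ together with $s\leqslant k_{0}$. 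The right-hand side is a fixed positive constant times $(\alpha/(2+\epsilon))^{k-k_{0}}$, so it exceeds $1$ once $k$ is sufficiently large, whence $R_{k}(C_{r})>(r-1)(2+\epsilon)^{k-1}$, as required.

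The only substantive ingredient beyond Theorem \ref{Thm1} is the product construction, and the single point that needs care is that the outer block $\mathcal{D}$ must have odd girth \emph{strictly} greater than $r$ rather than merely being $C_{r}$-free: a blow-up of a colour class of odd girth exactly $r$ would contain $C_{r}$. This is precisely why Theorem \ref{Thm1} is applied with parameter $r+2$. The quantitative improvement over (\ref{eq1}) is then exactly the statement that each colour of $\mathcal{D}$ contributes a ``doubling rate'' of $\alpha=(2^{k_{0}}+1)^{1/k_{0}}>2$ rather than of $2$.
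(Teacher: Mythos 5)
Your proposal is correct and follows essentially the same route as the paper: the product colouring you define is exactly the paper's $\mathcal{A} * \mathcal{B}$, your key claim (inner factor $C_{r}$-free, outer factor of odd girth strictly greater than $r$ implies the product is $C_{r}$-free) is the paper's Lemma \ref{Lemma2}, and the iteration starting from the Erd\H{o}s--Graham colouring with the remainder $s$ absorbed into the base is the paper's proof of Theorem \ref{Thm2} with only cosmetic differences in bookkeeping.
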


The proof of Theorem \ref{Thm2} can be found in Section \ref{Section3}.  We remark that Theorem \ref{Thm2} can not be used to say anything about the behaviour of $R_{k}(C_{r})$ when $k$ is fixed and $r$ is increasing.  Bondy and Erd\H{o}s \cite{BondyErdos} showed that their conjecture holds for all $r$ when $k = 2$.  For $k = 3$, \L uczak \cite{Luczak} employed the regularity method to prove that Bondy and Erd\H{o}s's conjecture holds asymptotically, showing that $R_{3}(C_{r}) = 4r + o(r)$ for odd $r$ increasing.  Kohayakawa, Simonovits and Skokan \cite{KohSimSko} used \L uczak's  method together with stability methods to show that the conjecture is true for $k = 3$ when $r$ is sufficiently large.  Recently, Jenssen and Skokan \cite{JenssenSkokan} showed that Conjecture \ref{Conj1} is true for all fixed $k$ and all $r$ sufficiently large.  They achieved this by using \L uczak's regularity method to turn the problem into one in convex optimisation.

When $r = 3$, it is well known that equality does not hold in (\ref{eq1}).  A result of Fredricksen and Sweet \cite{FredSweet} on \textit{Sum-Free Partitions} shows that $R_{k}(C_{3}) \geqslant c(3.1996 \ldots)^{k}$ for some constant $c$.  We refer the reader to Abbott and Hanson's paper \cite{AbbottHanson} for details about the connection between sum-free partitions and Ramsey Numbers.  As an upper bound, Greenwood and Gleason \cite{GleasonGreenwood} showed that  $R_{k}(C_{3}) \leqslant ek! + 1$, see also Schur \cite{Schur}.  It is a famous open problem to determine whether or not $R_{k}(C_{3})$ is super-exponential in $k$.
\vspace{-1.5 mm}
\section{Colourings with no Short Odd Cycles}\label{Section2}
\vspace{-0.5mm}
For a set $X$, let $X^{(2)} = \big\{ \{ x,y \} : x,y \in X, x \neq y \big\}$.  Given a colouring $\mathcal{C}$ of a graph $G$, let $G(\mathcal{C}_{i})$ be the graph on vertex set $V(G)$ whose edges are those of $G$ that received colour $i$ in $\mathcal{C}$.  Given an edge $ \{x,y\} \in E(G)$, let $\mathcal{C}(x,y)$ be the colour $\{x,y\}$ receives in $\mathcal{C}$.  We begin by focusing our attention on the odd girth of each $G(\mathcal{C}_{i})$ rather than the odd girth of $\mathcal{C}$ as a whole.  We say that $\mathcal{C}$ is an $(r_{1},\ldots,r_{k})$-colouring of $G$ if  $ \text{og}\big( G(\mathcal{C}_{i}) \big) \geqslant r_{i}$ for each $i$.  The first main idea of the proof of Theorem \ref{Thm1} is that we would like to show that if there exists an $(r_{1},\ldots,r_{k})$-colouring of $K_{2^{k}+1}$ then we can use it to build an $(r_{1}+2,r_{2},\ldots,r_{k},r_{k+1})$-colouring of $K_{2^{k+1}+1}$, where $r_{k+1} \geqslant r_{1}+2$.  Given this, we would apply this idea inductively, relabelling the colours at each step so that $r_{1}$ is minimal, to find $k$-colourings of $K_{2^{k}+1}$ (for some $k$) with arbitrarily high odd girth.

Unfortunately we are unable to come up with such a construction for general $(r_{1},\ldots,r_{k})$-colourings.  As a result, the second main idea of our proof will be to impose stronger conditions on our colourings that will allow an induction argument to hold.  We say a graph $G$ is $r$\textit{-round} if there exists a partition of $V(G)$ into sets $(X_{1},\ldots,X_{r})$ such that each edge of $G$ lies between one of the pairs $ (X_{1}, X_{2}),(X_{2}, X_{3}),\ldots,(X_{r-1}, X_{r})$ or $(X_{1}, X_{r})$.  When $r$ is an odd integer we have that any odd cycle in an $r$-round graph $G$ must contain at least one edge between each such pair and so $\text{og}(G) \geqslant r$.

We say that an $r$-round graph $G$ is \textit{rooted} with root $O$, for some vertex $O \in V(G)$, if $X_{1} = \{ O \}$.  We say a $k$-colouring of $G$ is an $(r_{1},\ldots,r_{k})$\textit{-rooted-round-colouring}, and write $(r_{1},\ldots,r_{k})$-RRC, if there exists a vertex $O \in V(G)$ such that $G(\mathcal{C}_{i})$ is a rooted $r_{i}$-round graph with root $O$  for each $i$.  We call $O$ the root of the colouring. 

Note that all $(r_{1},\ldots,r_{k})$-RRC's are $(r_{1},\ldots,r_{k})$-colourings. More generally, it is straightforward yet slightly tedious to prove that a colouring $\mathcal{C}$ of a graph $G$ is an $(r_{1},\ldots,r_{k})$-RRC with root $O \in V(G)$ if and only if $og( G ( \mathcal{C}_{i} ) ) \geqslant r_{i}$ for each $i$ and all monochromatic odd cycles of the RRC go through $O$.  We do not make use of this fact in our argument and so omit its proof from this paper.

The main tool for the proof of Theorem \ref{Thm1} is the following lemma.
\begin{Lemma}\label{Lemma1}

Let $r_{1}, \ldots, r_{k}$ and $n$ be positive integers.  If there exists an $(r_{1},\ldots,r_{k})$-RRC of $K_{n}$ then there exists an $(r_{1}+2,r_{2}, \ldots , r_{k},2r_{1} -1)$-RRC of $K_{2n-1}$.

\end{Lemma}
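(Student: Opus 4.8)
The plan is to manufacture the colouring of $K_{2n-1}$ from two copies of the given colouring of $K_{n}$, identified along the common root. Fix an $(r_{1},\dots,r_{k})$-RRC $\mathcal{C}$ of $K_{n}$ with root $O$, and for each colour $i$ let $\{O\}=X^{i}_{1},X^{i}_{2},\dots,X^{i}_{r_{i}}$ be the rooted round partition of $G(\mathcal{C}_{i})$. Put $V(K_{2n-1})=\{O\}\cup A\cup B$ with $A$, $B$ disjoint copies of $V(K_{n})\setminus\{O\}$, and for $v\in A$ write $v'$ for its mirror in $B$; then $\{O\}\cup A$ and $\{O\}\cup B$ span two copies of $K_{n}$ meeting only at $O$, and the ``cross'' edges are those joining $A$ and $B$. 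Inside each of the two copies I would start from $\mathcal{C}$, but I must recolour the colour-$1$ edges meeting $O$: the short odd cycles of $G(\mathcal{C}_{1})$ all pass through $O$ (an odd cycle of length $r_{1}$ in an $r_{1}$-round graph must wind once around all the classes, using the $X^{1}_{1}$--$X^{1}_{2}$ and $X^{1}_{1}$--$X^{1}_{r_{1}}$ pairs), so these edges have to leave colour $1$ if its odd girth is to rise from $r_{1}$ to $r_{1}+2$, and in any case to give colour $1$ the required $(r_{1}+2)$-round \emph{structure}. After peeling them off, what remains of colour $1$ in each copy is an $r_{1}$-round structure not using its closing pair, hence bipartite, and glueing the two such structures at $O$ gives a ``linear'' chain of $2r_{1}-1$ level sets $X^{1}_{r_{1}},\dots,X^{1}_{2},\{O\},(X^{1}_{2})',\dots,(X^{1}_{r_{1}})'$. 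The cross edges I would colour by transporting $\mathcal{C}$: an edge $\{u,v'\}$ with $u\neq v$ takes colour $\mathcal{C}(u,v)$ when that colour is not $1$, while the cross edges coming from colour-$1$ edges of $\mathcal{C}$, the twin edges $\{v,v'\}$, and the peeled-off $O$-edges are all pushed into a brand new colour $k+1$.

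The verification then has three ingredients. For $2\le c\le k$: the partition $\{O\},X^{c}_{2}\cup(X^{c}_{2})',\dots,X^{c}_{r_{c}}\cup(X^{c}_{r_{c}})'$ should witness that $G(\mathcal{C}_{c})$ in the new colouring is rooted $r_{c}$-round --- edges inside either copy join consecutive classes as they did in $\mathcal{C}$, a cross edge of colour $c$ mirrors a colour-$c$ edge of $\mathcal{C}$ and so joins consecutive classes, and nothing of colour $c$ lands inside a class because only $\mathcal{C}$-colour-$c$ edges were copied and the twins were diverted. For colour $1$: the linear chain of $2r_{1}-1$ level sets above carries no cross edges of colour $1$, so it can be folded onto a cycle of $r_{1}+2$ classes with $\{O\}$ still the first class, giving a rooted $(r_{1}+2)$-round structure (for $r_{1}\le 3$ no folding is needed, only padding with empty classes). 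For colour $k+1$: one exhibits a rooted $(2r_{1}-1)$-round partition --- built from $\{O\}$ together with the colour-$1$ level sets of the two copies, arranged so that a twin edge, a mirrored colour-$1$ edge, or a peeled-off $O$-edge always runs between consecutive classes --- and this is the step that in fact pins down the precise rule for the cross edges.

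This last step is where I expect all the difficulty to sit, and the naive version of the rule above is too crude: putting both peeled-off $O$-edges $\{O,w\}$, $\{O,w'\}$ and the twin $\{w,w'\}$ into colour $k+1$ makes a triangle through $O$, and colouring \emph{all} colour-$1$-type cross edges $k+1$ can create short odd cycles that alternate between the two copies and cut straight across the intended $2r_{1}-1$ rounds. The resolution is to treat the two copies asymmetrically with respect to colour $1$ (peel the first pair in one copy and the closing pair in the other), and to choose carefully which of the two mirror edges $\{u,v'\},\{v,u'\}$ of each colour-$1$ edge of $\mathcal{C}$ goes where, so that colour $k+1$ stays bipartite (or at least $(2r_{1}-1)$-round) with root $O$. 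In short, the single substantive obstacle is organising the edges at and near $O$ so that none of the $k+1$ colour classes acquires a monochromatic odd cycle shorter than its target, and I would budget most of the proof for that bookkeeping.
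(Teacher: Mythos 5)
Your overall framework matches the paper's: double the colouring across the root $O$, give the twin edges $\{x,x'\}$ a new colour $k+1$, transport the remaining colours to both copies and to the cross edges, and recolour certain colour-$1$ edges near $O$ so that colour $1$ becomes rooted $(r_{1}+2)$-round and colour $k+1$ becomes rooted $(2r_{1}-1)$-round. However, there is a genuine gap exactly where you say you would ``budget most of the proof'': the rule you actually write down --- send \emph{all} cross edges arising from colour-$1$ edges, plus the peeled $O$-edges, to colour $k+1$ --- fails, and the corrected rule is never specified or verified. Concretely, with your asymmetric peeling ($O$--$X^{1}_{2}$ in one copy, $O$--$Y^{1}_{r_{1}}$ in the other), colour $k+1$ under your stated rule contains the odd cycle $O,u_{2},u_{3}',u_{4},u_{5}',\ldots,u_{r_{1}}',O$ whenever $Ou_{2}u_{3}\cdots u_{r_{1}}$ is a colour-$1$ cycle of $\mathcal{A}$ with $u_{l}\in X^{1}_{l}$: every edge of it is either a peeled $O$-edge, a twin-free cross edge from a colour-$1$ edge, and the cycle has length $r_{1}$, far short of $2r_{1}-1$. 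A new colour of odd girth only $r_{1}$ also ruins the intended application (in the induction behind Theorem \ref{Thm1} the minimum girth would then never rise). You acknowledge this failure and assert that a careful choice of which mirror edge $\{u,v'\}$ or $\{u',v\}$ receives colour $k+1$ will repair it, but that choice is the entire substance of the lemma and is left open; moreover it is coupled to your colour-$1$ verification, which as written depends on colour $1$ having \emph{no} cross edges --- once some mirrors are returned to colour $1$, the ``fold the linear chain onto $r_{1}+2$ classes'' step must be rechecked against those retained edges.

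For comparison, the paper's resolution is: keep the mirrors between $X^{1}_{l}$ and $Y^{1}_{l+1}$ in colour $1$, and move to colour $k+1$ only the colour-$1$ edges between $O$ and $X^{1}_{2}$, between $Y^{1}_{l}$ and $X^{1}_{l+1}$ for $l=2,\ldots,r_{1}-1$, and between $O$ and $Y^{1}_{r_{1}}$. Then colour $1$ is rooted $(r_{1}+2)$-round with partition $\big(O,Y^{1}_{2},Y^{1}_{3},X^{1}_{2}\cup Y^{1}_{4},\ldots,X^{1}_{r_{1}-2}\cup Y^{1}_{r_{1}},X^{1}_{r_{1}-1},X^{1}_{r_{1}}\big)$ --- essentially your fold, but one now checks that the retained cross edges join consecutive classes --- and colour $k+1$ is rooted $(2r_{1}-1)$-round with the interleaved partition $\big(O,X^{1}_{2},Y^{1}_{2},X^{1}_{3},Y^{1}_{3},\ldots,X^{1}_{r_{1}},Y^{1}_{r_{1}}\big)$, since twin edges join $X^{1}_{l}$ to $Y^{1}_{l}$ and the recoloured edges join consecutive or closing classes. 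Until a rule of this kind is pinned down and both of these checks are carried out, your argument does not establish the lemma.
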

\begin{proof}[Proof of Lemma \ref{Lemma1}]

Let $G$ be the complete graph on $n$ vertices and let $\mathcal{A}$ be an $(r_{1},\ldots,r_{k})$-RRC of $G$ with root $O$.  For each $i$, let $(O  , X^{i}_{2}, \ldots,X^{i}_{r_{i}} )$ be the partition of $V(G)$ that realises $G(\mathcal{A} _{i})$ as an $r_{i}$-round graph.

Let $H$ be the complete graph on $2n-1$ vertices with vertex set $\{ O \} \cup U \cup U'$ where $U = V(G) \setminus \{O \}$ and $U' = \{ x' : x \in U \}$ is a copy of $U$.  For each pair of integers $i,j$, with $j\geqslant 2$, we define $Y^{i}_{j} \subseteq U'$ to be the set $\{ x' : x \in X^{i}_{j} \}$.  Let $\mathcal{B}$ be the following $(k+1)$-colouring of $H$:
\begin{enumerate}

\item $\mathcal{B}(O,x) = \mathcal{B}(O,x') = \mathcal{A}(O,x)$ for all $x \in U$,
\item $\mathcal{B}(x,y) = \mathcal{B}(x',y) = \mathcal{B}(x,y') = \mathcal{B}(x',y') = \mathcal{A}(x,y)$ for all $\{x,y\}  \in U^{(2)}$,
\item $\mathcal{B}(x,x') = k+1$ for all $x \in U$.

\end{enumerate}

It is easy to check that every edge of $H$ is coloured by $\mathcal{B}$.  We now modify $\mathcal{B}$ to obtain a new colouring, which we call $\mathcal{C}$, that will be our desired $(r_{1}+2,r_{2}, \ldots , r_{k},2r_{1} -1)$-RRC of $H$.  Let $F$ be the following set of edges in $H$:

\begin{enumerate}
\item all edges that lie between $O$ and $X^{1}_{2}$,
\item all edges that lie between $Y^{1}_{l}$ and $X^{1}_{l+1}$ for each $l = 2,\ldots,r_{1}-1$,
\item all edges between $O$ and $Y^{1}_{r_{1}}$.
\end{enumerate}

Moreover, let $F(\mathcal{B}_{1})$ be the set of edges in $F$ that have colour $1$ in $\mathcal{B}$.   We obtain $\mathcal{C}$ from  $\mathcal{B}$ by giving colour $k+1$ to all edges in $F(\mathcal{B}_{1})$.  All other edges of $H$ receive the same colour in $\mathcal{C}$ as they did in $\mathcal{B}$. See Figure \ref{Fig1} for an illustration of the colours $1$ and $k+1$ in $\mathcal{C}$.  To complete the proof of Lemma \ref{Lemma1} we note that is easy to verify the following three statements:

\begin{enumerate}
\item $H(\mathcal{C}_{1})$ is a rooted $(r_{1} + 2)$-round graph with root $O$ and partition $\big( O,Y^{1}_{2},Y^{1}_{3}, W_{4}, W_{5}, W_{6}, \ldots, W_{r_{1}},X^{1}_{r_{1}-1},X^{1}_{r_{1}} \big)$ where $W_{l} = X^{1}_{l-2} \cup Y^{1}_{l} $ for each $l\geqslant 4$.
\item  $H(\mathcal{C}_{i})$, for $i = 2,3,\ldots,k$, is a rooted $r_{i}$-round graph with root $O$ and partition $\big(O , X^{i}_{2} \cup Y^{i}_{2}, X^{i}_{3} \cup Y^{i}_{3}, \ldots,X^{i}_{r_{i}} \cup Y^{i}_{r_{i}}\big)$.
\item $H(\mathcal{C}_{k+1})$ is a rooted $(2r_{1} -1)$-round graph with root $O$ and partition $\big( O, X^{1}_{2},Y^{1}_{2}, X^{1}_{3},Y^{1}_{3}, \ldots,X^{1}_{r_{1}},Y^{1}_{r_{1}} \big)$.
\end{enumerate}
\end{proof}
\vspace{-1.95mm}
\begin{figure}[ht]
    \centering
	\includegraphics[scale=0.8]{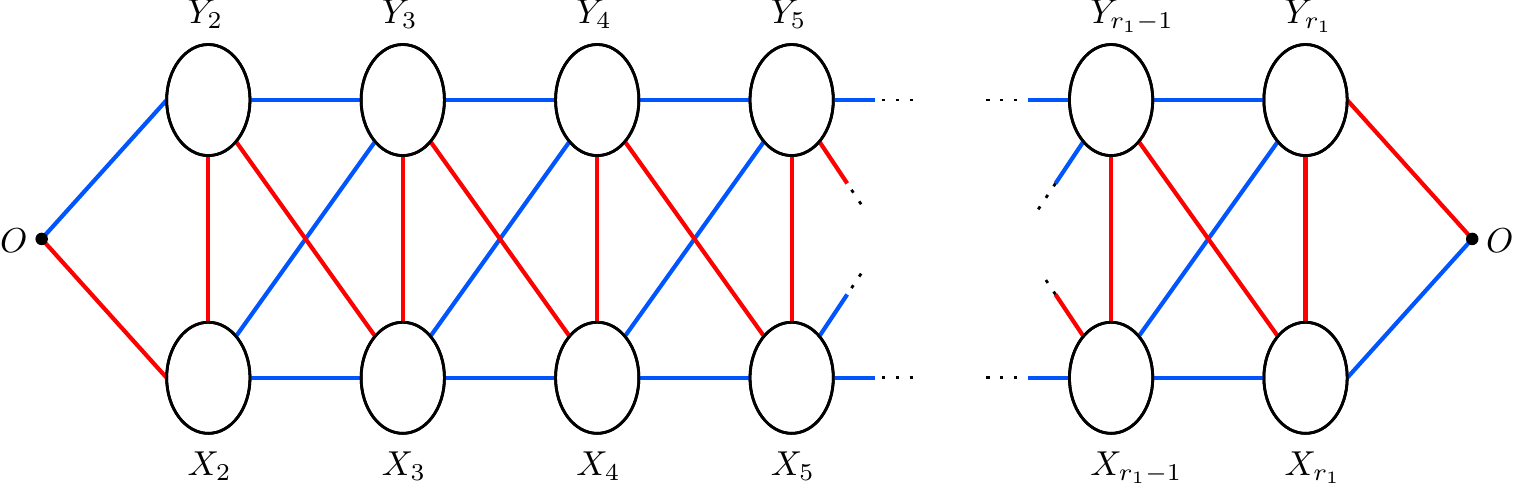}
    \caption{This diagram shows the colours $1$ and $k+1$ in the colouring $\mathcal{C}$ of $H$.  The blue lines represent edges of colour $1$ while the red lines represent edges of colour $k+1$.  For visual clarity we have suppressed the superscripts from each of the sets $X_{i}^{1}$ and $Y_{i}^{1}$, and also drawn the vertex $O$ twice, once at each end of the picture.}
	\label{Fig1}
\end{figure}

The proof of Theorem \ref{Thm1} follows almost immediately from Lemma \ref{Lemma1}.
\begin{proof}[Proof of Theorem \ref{Thm1}]

Note that if $n = 2^{k} + 1$ then $2n-1 = 2^{k+1} + 1$. Consider $2$-colouring the complete graph on vertex set $\{1,2,3,4,5\}$ by colouring the edges $\{(1,2),(2,3),(3,4),(4,5),(1,5) \} $ red and colouring the remaining edges blue.  This is a $(5,5)$-RRC where any vertex can be chosen as the root.  Starting from this $(5,5)$-RRC of $K_{5}$, we can inductively apply Lemma \ref{Lemma1}, relabelling the colours so that $r_{1}$ is minimal at each step, to find $k$-colourings of $K_{2^{k}+1}$ with arbitrarily high odd girth. 

\end{proof}

As noted in the introduction, Erd\H{o}s and Graham \cite{ErdosGraham} were interested in how large the odd girth of a $k$-colouring of $K_{2^{k}+1}$ can be.  It can be easily observed from the proofs of Theorem \ref{Thm1} and Lemma \ref{Lemma1} that if $k \geqslant \sqrt{2}^{(r-3)}$ (and $r$ is odd) then there exist $k$-colourings of $K_{2^{k}+1}$ with odd girth at least $r$.  Indeed, given an RRC of a complete graph,  we can apply Lemma \ref{Lemma1} once to each of its colours to obtain a new RRC with twice as many colours and odd girth at least $2$ larger.  This shows that for any $k$ there exists a $k$-colouring of $K_{2^{k}+1}$ with odd girth at least $2 \log_{2}(k) + 1$.  This bound is simple to obtain yet it is far from the exact behaviour of our sequence of colourings.  In Corollary \ref{Coro1} we analyse this sequence more carefully to obtain improved bounds.

\begin{Coro}\label{Coro1}
There exists a constant $c \approx 4.7685$ such that if $k \geqslant c \sqrt {2}^{(t^{2} - 3t + 2)}$ then there exists a $k$-colouring of $K_{2^{k}+1}$ with odd girth greater than $2^{t}$.
\end{Coro}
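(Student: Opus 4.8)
The plan is to iterate Lemma~\ref{Lemma1} more carefully than in the proof of Theorem~\ref{Thm1}. The point is that when Lemma~\ref{Lemma1} is applied to a colour of odd girth $r$, the \emph{new} colour it creates has odd girth $2r-1$, which already exceeds $2^{t}$ as soon as $r\geqslant 2^{t-1}+1$. So the only expensive part of boosting a colouring past odd girth $2^{t}$ is reaching odd girth $2^{t-1}$ in the first place; from there one can push each colour over the line essentially for free.

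First I would record two simple facts. \textbf{Extension:} applying Lemma~\ref{Lemma1} to any colour of an $(r_{1},\dots,r_{k})$-RRC of $K_{2^{k}+1}$ yields a $(k{+}1)$-colour RRC of $K_{2^{k+1}+1}$ with odd girths $r_{1}+2,\,r_{2},\dots,r_{k},\,2r_{1}-1$ (using $n\mapsto 2n-1$, and noting $2^{a}+1\mapsto 2^{a+1}+1$); in particular, if every original odd girth exceeds $2^{t}$ then so does every new one, so from a $k_{0}$-colour RRC of $K_{2^{k_{0}}+1}$ with odd girth $>2^{t}$ we obtain such an RRC with $k$ colours for every $k\geqslant k_{0}$. \textbf{Boosting:} suppose there is a $k'$-colour RRC of $K_{2^{k'}+1}$ with odd girth $>2^{t-1}$. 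Since $2^{t-1}$ is even, every colour in it has odd girth $\geqslant 2^{t-1}+1$. Process the colours one at a time: to a colour of current odd girth $r\ (\geqslant 2^{t-1}+1)$ apply Lemma~\ref{Lemma1} repeatedly; after $(2^{t}+1-r)/2\leqslant 2^{t-2}$ applications its odd girth exceeds $2^{t}$, and every colour created along the way has odd girth $2v-1$ for some $v\geqslant 2^{t-1}+1$, hence odd girth $\geqslant 2^{t}+1>2^{t}$, so it needs no further attention. After all $k'$ original colours are processed we have made at most $k'\cdot 2^{t-2}$ applications of Lemma~\ref{Lemma1} in total, giving a $k''$-colour RRC of $K_{2^{k''}+1}$ with odd girth $>2^{t}$ and $k''\leqslant(1+2^{t-2})k'$.

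Now iterate the boosting step, starting from the $1$-colouring of $K_{3}=K_{2^{1}+1}$ (whose single colour class $K_{3}$ is rooted $3$-round via singleton blocks, so this is a $(3)$-RRC with odd girth $3>2^{1}$). This gives, for each $t\geqslant 1$, a $k_{t}$-colour RRC of $K_{2^{k_{t}}+1}$ with odd girth $>2^{t}$, where
\[
k_{t}\;\leqslant\;\prod_{j=0}^{t-2}\bigl(1+2^{j}\bigr)
 \;=\;2^{\,0+1+\cdots+(t-2)}\prod_{j=0}^{t-2}\bigl(1+2^{-j}\bigr)
 \;\leqslant\;2^{(t-1)(t-2)/2}\prod_{j=0}^{\infty}\bigl(1+2^{-j}\bigr).
\]
Writing $c:=\prod_{j=0}^{\infty}(1+2^{-j})$, a convergent product with $c\approx 4.7685$, and using $2^{(t-1)(t-2)/2}=\sqrt{2}^{\,t^{2}-3t+2}$, this reads $k_{t}\leqslant c\,\sqrt{2}^{\,t^{2}-3t+2}$. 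Finally, if $k\geqslant c\,\sqrt{2}^{\,t^{2}-3t+2}$ then $k\geqslant k_{t}$, so by the extension step there is a $k$-colour RRC of $K_{2^{k}+1}$ with odd girth $>2^{t}$, which is in particular a $k$-colouring of $K_{2^{k}+1}$ with odd girth greater than $2^{t}$.

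The step I expect to need the most care is the boosting step, and specifically the two parity remarks it rests on: that an odd girth strictly above the even number $2^{t-1}$ is automatically at least $2^{t-1}+1$, and that this lower bound on the girths of the colours being processed is exactly what forces every spawned colour to land beyond $2^{t}$. The rest — the extension step, the vertex-count bookkeeping, and the evaluation of $\prod_{j\geqslant 0}(1+2^{-j})\approx 4.7685$ — is routine.
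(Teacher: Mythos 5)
Your proposal is correct and follows essentially the same route as the paper: the paper iterates Lemma~\ref{Lemma1} always on the minimum-girth colour, observes that the freshly created colour (girth $2r_{1}-1$) is already past the target once $r_{1}\geqslant 2^{t-1}+1$, and arrives at exactly your count $p(t)=\prod_{i=0}^{t-2}(2^{i}+1)\leqslant c\,\sqrt{2}^{\,t^{2}-3t+2}$ with $c=\prod_{i\geqslant 0}(1+2^{-i})\approx 4.7685$. Your recursion $k_{t}\leqslant(1+2^{t-2})k_{t-1}$, starting from $K_{3}$ rather than the paper's $(5,5)$-RRC of $K_{5}$, reproduces the same bound, and your explicit ``extension step'' just makes precise what the paper leaves implicit for $k$ exceeding $p(t)$.
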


\begin{proof}[Proof of Corollary \ref{Coro1}]
Given a sequence of integers $(r_{1},\ldots,r_{k})$, let 
\vspace*{-1mm}
\begin{equation}
f \big( (r_{1},\ldots,r_{k}) \big) = (r_{1}+2,r_{2}, \ldots , r_{k},2r_{1} - 1). \nonumber
\end{equation}

Let $\mathbf{r}_{2} = (5,5)$ and for $j>2$ let $\mathbf{r}_{j}$ be the sequence obtained from rearranging the entries of $f(\mathbf{r}_{j-1})$ in increasing order.  The definition of the function $f$ comes from the construction given in Lemma \ref{Lemma1} and the starting sequence $\mathbf{r}_{2}$ corresponds to our $(5,5)$-RCC of $K_{5}$.

To prove our corollary it is sufficient to analyse how quickly the minimum value of $\mathbf{r}_{j}$ grows as $j$ increases. Define the function 
\begin{equation}
p(t) = \prod_{i = 0}^{t-2} \Big(2^{i} + 1 \Big). \nonumber
\end{equation}
We show by induction on $t$ that $\min(\mathbf{r}_{p(t)})\geqslant 2^{t} + 1$.  Our base case holds when $t = 2$ as $\min(\mathbf{r}_{2}) = 5$.  Suppose the statement holds true for $t-1$.  As $\min(\mathbf{r}_{p(t-1)})\geqslant 2^{t-1} + 1$, we have that each time we apply $f$ to $\mathbf{r}_{p(t-1)}$ (and rearrange its elements in increasing order), the newest element we've added is at least $2^{t} + 1$.  Thus, to find $m$ such that $\min(\mathbf{r}_{m})\geqslant 2^{t} + 1$, we are only concerned with the ``adding $2$" process of $f$.  As $\mathbf{r}_{p(t-1)}$ has $p(t-1)$ elements, each at least $2^{t-1} +1 $, we only need to repeat the process of adding $2$ to its minimal element at most $2^{t-2}p(t-1)$ times to find a sequence whose minimum value is at least $2^{t}+1 $.  Thus, as $2^{t-2}p(t-1) + p(t-1) = p(t)$, our inductive statement holds true for $t$.  To complete the proof of the corollary we note that
\begin{eqnarray}
p(t) &=& \sqrt{2}^{ (t^{2} -3t +2)}\prod ^{t-2}_{i=0} \Big( 1+\frac{1}{2^{i}} \Big) \nonumber \\
& < & c \sqrt{2}^{ (t^{2} -3t +2)}, \nonumber
\end{eqnarray}
where $c = \prod \limits_{i \geqslant 0} ( 1+\frac{1}{2^{i}} ) \approx 4.7685$.
\end{proof}

Corollary \ref{Coro1} shows that there exist $k$-colourings of $K_{2^{k}+1}$ with odd girth at least $2^{\sqrt{2 \log_{2}(k) - c_{0}}}$ for some constant $c_{0}$.  We note that Corollary \ref{Coro1} still does not give the  exact bound for the behaviour of our sequence of colourings.  Moreover, we do not believe that our RRCs give rise to the best possible bounds that one could hope for from general colourings.  Indeed, the colourings we construct have some colours with odd girth almost twice as large as some other colours.  It seems more likely that the colourings with the largest odd girth will be more balanced across all of their colours.  As such, we have not analysed the minimum values of the sequence of $\mathbf{r}_{j}$'s any more carefully to obtain better bounds.
\vspace{-2mm}

\section{Multicolour Ramsey Numbers of Odd Cycles}\label{Section3}
\vspace{-1mm}
In this section we often write the pair $(G,\mathcal{C} )$ to refer to a colouring $\mathcal{C}$ of a graph $G$.  This will allow us to simultaneously keep track of multiple colourings of different graphs.  Let $G$ and $H$ be complete graphs on $m$ and $n$ vertices respectively and let $G \times H$ be the complete graph with vertex set $V(G) \times V(H)$.  Moreover, let $(G,\mathcal{A})$ be a $j$-colouring  using colours $1,\ldots,j$ and let $(H,\mathcal{B})$ be a $k$-colouring using colours $j+1,\ldots,j+k$.  We define the set of colourings $\times (\mathcal{A},\mathcal{B})$ to be all $(k+j)$-colourings $\mathcal{C}$ of $ G \times H$  such that

\begin{enumerate}
\item $\mathcal{C} \big( (g_{1},h),(g_{2},h) \big) = \mathcal{A}(g_{1},g_{2})$ for all $ \{ g_{1},g_{2} \} \in V(G)^{(2)}$, $h \in V(H)$,
\item $\mathcal{C} \big( (g,h_{1}),(g,h_{2}) \big) = \mathcal{B}(h_{1},h_{2})$ for all $g \in V(G)$, $\{ h_{1},h_{2} \} \in V(H)^{(2)}$,
\item $\mathcal{C} \big( (g_{1},h_{1}),(g_{2},h_{2}) \big) = \mathcal{A}(g_{1},g_{2})$ or $\mathcal{B}(h_{1},h_{2})$ for all $\{ g_{1},g_{2} \} \in V(G)^{(2)}$, $\{ h_{1},h_{2} \} \in V(H)^{(2)}$.
\end{enumerate}

Note that $\times (\mathcal{A},\mathcal{B})$ is a set of colourings as we have a choice of colour for many of the edges of $G \times H$.  We call $\times (\mathcal{A},\mathcal{B})$ the set of \textit{product colourings} of $(G,\mathcal{A})$ and $(H,\mathcal{B})$.  In particular, we define $(G \times H,\mathcal{A} * \mathcal{B})$ to be the product colouring with $\mathcal{C} \big( (g_{1},h_{1}),(g_{2},h_{2}) \big) = \mathcal{B}(h_{1},h_{2})$ for all $\{ g_{1},g_{2} \} \in V(G)^{(2)}$, $\{ h_{1},h_{2} \} \in V(H)^{(2)}$.  $(G \times H,\mathcal{A} * \mathcal{B})$ can be thought of as the colouring that arises from replacing every vertex of $H$ in $(H,\mathcal{B})$ with a copy of $(G,\mathcal{A})$.

Given an odd integer $r$, we would like to use product colourings to build new $C_{r}$-free colourings from other known $C_{r}$-free colourings.  Unfortunately, if $(G,\mathcal{A})$ and $(H,\mathcal{B})$ are both $C_{r}$-free then it is not necessarily the case that there are any colourings in $\times (\mathcal{A},\mathcal{B})$ that are also $C_{r}$-free.  Nevertheless, product colourings do have the useful property of ``preserving odd girth" and allow us to build new $C_{r}$-free colourings, subject to the right conditions as described by the first part of Lemma \ref{Lemma2}.  This, with Theorem \ref{Thm1}, is already enough to construct colourings that disprove Bondy and Erd\H{o}s's conjecture (Conjecture \ref{Conj1}).  However, in order to prove Theorem \ref{Thm2} as stated, we use the colouring $(G \times H,\mathcal{A} * \mathcal{B})$.  The second part of Lemma \ref{Lemma2} describes how this colouring preserves odd girth in an even stronger sense than general product colourings.

\begin{Lemma}\label{Lemma2}

\  \begin{enumerate}
\item Let $r$ be an integer and suppose that $(G,\mathcal{A})$ and $(H,\mathcal{B})$ are colourings with odd girth at least $r$.  Then any colouring of $G \times H$ in $\times(\mathcal{A},\mathcal{B})$ also has odd girth at least $r$.

\item Let $r$ be an odd integer.  Suppose $(G,\mathcal{A})$ is a $C_{r}$-free colouring and $(H,\mathcal{B})$ is a colouring with odd girth strictly greater than $r$.  Then the colouring $(G \times H,\mathcal{A} * \mathcal{B})$ is $C_{r}$-free.
\end{enumerate}
\end{Lemma}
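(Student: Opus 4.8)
The plan is to prove both parts by a single device: given a monochromatic cycle in a product colouring, project it onto one of the two coordinate factors, and exploit the fact that $\mathcal{A}$ and $\mathcal{B}$ use disjoint sets of colours, so that the colour of the cycle already determines which factor it ``lives in''. Throughout I would use the elementary fact that a graph containing a closed walk of odd length $\ell$ contains an odd cycle of length at most $\ell$, and I would repeatedly invoke the three defining conditions of a product colouring to decide the colour of a given edge of $G\times H$ from its two endpoints.

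For part (1), let $\mathcal{C}\in\times(\mathcal{A},\mathcal{B})$ and suppose $\mathcal{C}$ contains a monochromatic odd cycle $(g_1,h_1),\ldots,(g_\ell,h_\ell)$, of some colour $c$; since the palettes of $\mathcal{A}$ and $\mathcal{B}$ are disjoint, $c$ is a colour of exactly one of the two, and I may assume it is a colour of $\mathcal{A}$, the other case being symmetric with the roles of the two coordinates interchanged (projecting onto $H$ and using $\mathrm{og}(H,\mathcal{B})\geq r$ instead). First I would observe, by inspecting the three conditions, that any edge of $\mathcal{C}$ receiving a colour of $\mathcal{A}$ cannot be of the form $\{(g,h'),(g,h'')\}$, and in the two remaining cases has colour exactly $\mathcal{A}(g_i,g_{i+1})$ with $g_i\neq g_{i+1}$. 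Applying this to every edge of the cycle shows that $g_1,\ldots,g_\ell,g_1$ is a closed walk of odd length $\ell$ in $G(\mathcal{A}_c)$, hence $G(\mathcal{A}_c)$ contains a monochromatic odd cycle of length at most $\ell$; as $(G,\mathcal{A})$ has odd girth at least $r$ this forces $\ell\geq r$, which is exactly what is required.

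For part (2), I would argue by contradiction, assuming $(G\times H,\mathcal{A}*\mathcal{B})$ contains a monochromatic $C_r$ of colour $c$ on vertices $(g_1,h_1),\ldots,(g_r,h_r)$, and split into two cases according to which palette $c$ belongs to. If $c$ is a colour of $\mathcal{A}$: in the specific colouring $\mathcal{A}*\mathcal{B}$ the only edges receiving a colour of $\mathcal{A}$ are those of the form $\{(g,h'),(g',h')\}$, since by definition of $\mathcal{A}*\mathcal{B}$ the ``cross'' edges take colours of $\mathcal{B}$; hence consecutive cycle vertices share their second coordinate, chaining around forces all the $h_i$ to coincide, the $g_i$ are then distinct, and $g_1,\ldots,g_r,g_1$ is a monochromatic $C_r$ in $(G,\mathcal{A})$, contradicting $C_r$-freeness. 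If $c$ is a colour of $\mathcal{B}$: every edge of the cycle then has $h_i\neq h_{i+1}$ and colour $\mathcal{B}(h_i,h_{i+1})=c$, so $h_1,\ldots,h_r,h_1$ is a closed walk of odd length $r$ in $H(\mathcal{B}_c)$, which therefore contains a monochromatic odd cycle of length at most $r$ in $(H,\mathcal{B})$; this contradicts the hypothesis that $(H,\mathcal{B})$ has odd girth strictly greater than $r$.

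I do not expect a genuine obstacle: once the projection idea is in place the whole argument is bookkeeping with the three cases in the definition of a product colouring. The one place that demands care is the first case of part (2), where it is essential to use the particular colouring $\mathcal{A}*\mathcal{B}$ rather than an arbitrary member of $\times(\mathcal{A},\mathcal{B})$ — it is precisely the convention that cross-edges take $\mathcal{B}$-colours that traps a monochromatic $\mathcal{A}$-cycle inside a single copy of $G$, and this fails for general product colourings, which is exactly why part (2) is stated only for $\mathcal{A}*\mathcal{B}$.
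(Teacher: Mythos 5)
Your proposal is correct and follows essentially the same route as the paper: project a monochromatic odd cycle onto the factor whose palette contains its colour, use the fact that a closed odd walk contains an odd cycle of at most that length, and for part (2) use the special convention in $\mathcal{A}*\mathcal{B}$ that cross edges take $\mathcal{B}$-colours to trap an $\mathcal{A}$-coloured cycle inside a single copy of $G$. The only difference is that you spell out the bookkeeping (disjoint palettes, $g_i\neq g_{i+1}$, chaining of the second coordinate) that the paper leaves implicit.
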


\begin{proof}[Proof of Lemma \ref{Lemma2}]

We first prove part 1 of the lemma.  Let $(G \times H, \mathcal{C}) \in \times(\mathcal{A},\mathcal{B})$ and suppose that $(g_{1},h_{1}),(g_{2},h_{2}),\ldots,(g_{r'},h_{r'})$ is a monochromatic odd cycle in $\mathcal{C}$ with $r' < r$.  Without loss of generality we may assume that the colour of this monochromatic cycle is one of the colours that appears in $(H,\mathcal{B})$.  Under this assumption, we have that $h_{1},h_{2},\ldots,h_{r'}$ is a closed monochromatic odd walk in $(H,\mathcal{B})$.  It is easy to see that any such closed odd walk must contain an odd cycle of length less than or equal to $r'$. This contradicts og$(\mathcal{B}) \geqslant r > r'$. 

We now prove part 2 of the lemma.  Suppose $(g_{1},h_{1}),(g_{2},h_{2}),\ldots,(g_{r},h_{r})$ is a monochromatic cycle in $(G \times H,\mathcal{A} * \mathcal{B})$.  As og$(\mathcal{B}) > r$, an identical argument to the above proof of part 1 shows that this cycle cannot be in one of the colours in $(H,\mathcal{B})$.  Thus we may assume that the cycle is in one of the colours that appears in $(G,\mathcal{A})$.  However, by the definition of $\mathcal{A} * \mathcal{B}$, we would have that $g_{1},g_{2},\ldots,g_{r}$ is a monochromatic cycle of length $r$ in $(G,\mathcal{A})$, contradicting our assumption that $(G,\mathcal{A})$ is $C_{r}$-free.

\end{proof}

\begin{proof}[Proof of Theorem \ref{Thm2}]

Let $r$ be an odd integer.  By Theorem \ref{Thm1}, there exists a least integer $f = f(r)$ and an $f$-colouring of $K_{2^{f} + 1}$, which we call $\mathcal{A}$,  with odd girth strictly greater than $r$.  Given an integer $k$, let $m$ and $c$ be positive integers such that $k-1 = mf +c$ where $f>c \geqslant 0$.  As noted in the introduction, Erd\H{o}s and Graham \cite{ErdosGraham} showed that there exists a $C_{r}$-free $(c+1)$-colouring of $K_{n}$, where $n = (r-1)2^{c}$.  Call this $(c+1)$-colouring $\mathcal{B}_{0}$ and for $i \geqslant 1$ let $\mathcal{B}_{i} = \mathcal{B}_{i-1}*\mathcal{A}$.  By Lemma \ref{Lemma2}, the colouring $\mathcal{B}_{m}$ is a $k$-colouring of the complete graph on $(r-1)2^{c}(2^{f}+1)^{m}$ vertices with no monochromatic cycle of length $r$.  For $\epsilon > 0$ sufficiently small and $k$ (equivalently $m$) sufficiently large, this graph has more than $(r-1)(2 + \epsilon )^{k-1}$ vertices.

\end{proof}

\section{Acknowledgements}

The first author was supported by an EPSRC doctoral studentship.


\begin{thebibliography}{1}

\bibitem{AbbottHanson} H. L. Abbott, D. Hanson. (1972) A problem of Schur and its generalizations. \textit{Acta Arith. XX}, $175-187$.

\bibitem{BondyErdos} J. A. Bondy, P. Erd\H{o}s. (1973) Ramsey numbers for cycles in graphs. \textit{J. Combin. Theory Ser. B 14}, $46-54$.

\bibitem{Chung} F. Chung.  (1997) Open problems of Paul Erd\H{o}s in graph theory.  \textit{Journal of Graph Theory 25}, $3-36$.

\bibitem{ErdosGraham} P. Erd\H{o}s, R.L. Graham. (1973)  On Partition Theorems for Finite Graphs. \textit{Colloq. Math. Soc. Janos Bolyai 10}, $515-527$.

\bibitem{FredSweet} H. Fredricksen, M. M. Sweet. (2000)  Symmetric Sum-Free Partitions and Lower Bounds for Schur Numbers. \textit{Electronic J. Combinatorics 7, No. 1, R32}, $1-9$.

\bibitem{GleasonGreenwood} R. E. Greenwood, A. M. Gleason. (1955) Combinatorial Relations and Chromatic Graphs. \textit{Canadian Journal of Mathematics, Vol. 7}, $1-7$.

\bibitem{JenssenSkokan} M. Jenssen, J. Skokan. (2016) Exact Ramsey numbers of odd cycles via nonlinear optimisation. \textit{arXiv:1608.05705, preprint, August 2016}.

\bibitem{KohSimSko} Y. Kohayakawa, M. Simonovits, J. Skokan.  (2005) The 3-colored Ramsey number of odd cycles.  \textit{ Electronic Notes in Discrete Mathematics, vol. 19}, $397-402$.

\bibitem{Luczak} T. \L uczak. (1999)  $R(C_{n},C_{n},C_{n}) \leqslant (4 + o(1))n$. \textit{J. Combin. Theory Ser. B 75}, $174-187$.

\bibitem{Schur} I. Schur. (1916) \"{U}ber die Kongruenz $x^{m} + y^{m} \equiv z^{m} (\text{mod } p)$. \textit{Jber. Deutsch.
Math. Verein. 25}, $114-116$.


\end{thebibliography}
\end{document}